\title{The animation of the opposite of finite sets}
\author{Benjamin Antieau}
\date{\today}
\setlist{noitemsep}
\DeclareSymbolFontAlphabet{\mathbb}{AMSb} 
\DeclareSymbolFontAlphabet{\mathbbl}{bbold}
\definecolor{todo}{rgb}{1,0,0}
\definecolor{conditional}{rgb}{0,1,0}
\definecolor{e-mail}{rgb}{0,.40,.80}
\definecolor{reference}{rgb}{.20,.60,.22}
\definecolor{mrnumber}{rgb}{.80,.40,0}
\definecolor{citation}{rgb}{0,.40,.80}
\let\oldmarginpar\marginpar
\renewcommand\marginpar[1]{\-\oldmarginpar[\raggedleft\footnotesize #1]%
{\raggedright\footnotesize #1}}
\newcommand{\Ascr}{\mathcal{A}}
\newcommand{\Cscr}{\mathcal{C}}
\newcommand{\Dscr}{\mathcal{D}}
\newcommand{\Fscr}{\mathcal{F}}
\newcommand{\Pscr}{\mathcal{P}}
\newcommand{\Sscr}{\mathcal{S}}
\newcommand{\B}{\mathrm{B}}
\newcommand{\C}{\mathrm{C}}
\newcommand{\D}{\mathrm{D}}
\renewcommand{\L}{\mathrm{L}}
\newcommand{\bE}{\mathbf{E}}
\newcommand{\bF}{\mathbf{F}}
\newcommand{\bP}{\mathbf{P}}
\newcommand{\bQ}{\mathbf{Q}}
\newcommand{\bZ}{\mathbf{Z}}
\newcommand{\Fin}{\Fscr\mathrm{in}}
\newcommand{\Ab}{\Ascr\mathrm{b}}
\newcommand{\op}{\mathrm{op}}
\newcommand{\Mod}{\mathrm{Mod}}
\newcommand{\Ind}{\mathrm{Ind}}
\newcommand{\Pro}{\mathrm{Pro}}
\newcommand{\CAlg}{\mathrm{CAlg}}
\newcommand{\DAlg}{\mathrm{DAlg}}
\newcommand{\LSym}{\mathrm{LSym}}
\newcommand{\Cont}{\mathrm{Cont}}
\newcommand{\cts}{\mathrm{cts}}
\newcommand{\Bool}{\mathrm{Bool}}
\newcommand{\sfp}{\mathrm{sfp}}
\newcommand{\An}{\mathrm{An}}
\newcommand{\Set}{\mathrm{Set}}
\newcommand{\fin}{\mathrm{fin}}
\newcommand{\perf}{\mathrm{perf}}
\renewcommand{\geq}{\geqslant}
\renewcommand{\leq}{\leqslant}
\newcommand{\stackspace}{2.5}
\newcommand{\stack}[2][1cm]{\;\tikz[baseline, yshift=.65ex]%
    {\foreach \k [evaluate=\k as \r using (.5*#2+.5-\k)*\stackspace] in {1,...,#2}{%
    \ifodd\k{\draw[->](0,\r pt)--(#1,\r pt);}%
    \else{\draw[<-](0,\r pt)--(#1,\r pt);}\fi
    }}\;}
\newcommand{\Map}{\mathrm{Map}}
\newcommand{\Hom}{\mathrm{Hom}}
\newcommand{\Fun}{\mathrm{Fun}}
\DeclareMathOperator*{\colim}{colim}
\DeclareMathOperator{\Spec}{Spec}
\newcommand{\we}{\simeq}
\newcommand{\iso}{\cong}
\theoremstyle{plain}
\newtheorem{theorem}{Theorem}[section]
\newtheorem*{theorem*}{Theorem}
\newtheorem{lemma}[theorem]{Lemma}
\newtheorem{corollary}[theorem]{Corollary}
\newtheorem*{corollary*}{Corollary}
\theoremstyle{plain}
\theoremstyle{definition}
\newtheoremstyle{named}{}{}{\itshape}{}{\bfseries}{.}{.5em}{#1 \thmnote{#3}}
\theoremstyle{named}
\theoremstyle{definition}
\newtheorem{definition}[theorem]{Definition}
\newtheorem{notation}[theorem]{Notation}
\newtheorem{example}[theorem]{Example}
\newtheorem*{example*}{Example}
\newtheorem*{question*}{Question}
\newtheorem{construction}[theorem]{Construction}
\newtheorem{remark}[theorem]{Remark}
\begin{document}

\maketitle

\begin{abstract}
    \noindent
    We give a direct proof of the fact that the animation of the category $\Fin^\op$ is a $1$-category.
\end{abstract}

\section{Introduction}

Various categories of commutative rings arise in the attempt to characterize homotopy types. Over
$\bQ$, this goes back to Sullivan~\cite{sullivan-infinitesimal} who used commutative differential
graded $\bQ$-algebras to study rational
homotopy theory. Over $\bF_p$, Mandell's theorem~\cite{mandell-cochains} captures $p$-adic homotopy-theoretic information
via the $\bF_p$-cochains $\C^*(X;\bF_p)$ as an $\bE_\infty$-algebra equipped with a trivialization of the power
operation $\Pscr^0$, which serves as Frobenius. More recently, several
authors~\cite{antieau_spherical,ekedahl-minimal,horel,ksz} have worked with
derived notions of binomial rings to study homotopy types of finite type nilpotent homotopy types over
$\bZ$.

The characteristic $p$ analogue of a binomial ring is a $p$-Boolean ring, a commutative
$\bF_p$-algebra $R$ such that $x^p=x$ for all $x\in R$. These are precisely those $\bF_p$-algebras
whose Frobenius is the identity.

The categories $\CAlg_{\bF_p}^{\varphi=1}$ of $p$-Boolean rings are all equivalent to the category
$\Ind(\Fin^\op)\we\Pro(\Fin)^\op$ via Stone duality. Indeed, one can check that
the rings of functions $\bF_p^S$ where $S$ is a finite set form a set of compact
generators of $\CAlg_{\bF_p}^{\varphi=1}$. (Stone duality is typically stated for $p=2$; this version
goes back to~\cite{stringall}.)

The notion of $p$-Boolean rings can be extended to the $\infty$-category of derived commutative
$\bF_p$-algebras in the sense of~\cite{raksit}. The associated $\infty$-category
$\DAlg_{\bF_p}^{\varphi=1}$ is equivalent by~\cite[Thm.~4.17]{antieau_spherical} to the
$\infty$-category $\Pro(\Sscr_{p\fin})^\op$, the opposite of
$\infty$-category of $p$-adic homotopy types. (See also~\cite{kriz}.) The composition
$\Pro(\Sscr_{p\fin})^\op\we\DAlg_{\bF_p}^{\varphi=1}\rightarrow\DAlg_{\bF_p}$ is given by taking
continuous cochains.

In particular, every object $R$ of $\DAlg_{\bF_p}^{\varphi=1}$ is coconnective in the sense that
$\pi_iR=0$ for $i>0$. It follows that the animation of the category $\CAlg_{\bF_p}^{\varphi=1}$ is
a $1$-category. However, as we remarked above, $\CAlg_{\bF_p}^{\varphi=1}=\Ind(\Fin^\op)$, so this
result can be interpreted as stating that the animation of $\Fin^\op$ is a $1$-category.
Our goal in this note is to review the algebraic proof of this fact, which effectively goes back
to~\cite[Prop.~11.6]{bhatt-scholze-witt},
and to give a direct $\infty$-categorical proof of this statement that does 
not resort to the theory of animated commutative rings.

\begin{theorem}\label{thm:intro}
    The animation $\An(\Fin^\op)$ is a $1$-category.
\end{theorem}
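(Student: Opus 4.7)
The plan is to identify $\An(\Fin^\op)$ with $P_\Sigma(\Fin^\op)\subset\Fun(\Fin,\Spc)$, the full subcategory of finite-product-preserving functors $F\colon\Fin\to\Spc$, and to prove directly that every such $F$ takes values in discrete spaces. Once this is established, the mapping space $\Map_{\An(\Fin^\op)}(F,G)$ is computed as the end $\int_{n\in\Fin}\Map_{\Spc}(F(n),G(n))$, a limit of discrete mapping spaces and hence itself discrete; this shows $\An(\Fin^\op)$ is a $1$-category.

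The key structural observation is that for each $n\geq 1$, the set $n=\{1,\ldots,n\}$ carries a commutative, associative, idempotent monoid structure in $\Fin$: take $\mu=\min$ with respect to the standard order, with the top element $n$ as unit. Since $F$ preserves finite products, $F(n)$ inherits a commutative idempotent $\EE_\infty$-monoid structure in $\Spc$, meaning the induced multiplication $\mu$ satisfies $\mu\circ\Delta=\id$. The edge case $n=\emptyset$ is handled separately: $\emptyset\times\emptyset=\emptyset$ in $\Fin$ forces $F(\emptyset)\we F(\emptyset)^2$ in $\Spc$, hence $F(\emptyset)\in\{*,\emptyset\}$.

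The main lemma I would prove is therefore: any commutative idempotent $\EE_\infty$-monoid $X$ in $\Spc$ is discrete. Working componentwise, each path-component $X_c$ is closed under $\mu$ because $\pi_0(X)$ inherits an idempotent commutative semigroup structure, and it suffices to show $\pi_n(X_c,p)=0$ for every $n\geq 1$ and any basepoint $p\in X_c$. Fix a prime $\ell$; by associativity and idempotency of $\mu$, the $\ell$-fold iterate $\mu^{(\ell)}\colon X^\ell\to X$ sends $(p,\ldots,p)\mapsto p$ and $(x,\ldots,x)\mapsto x$. On $\pi_n(X_c,p)$ the induced map $\mu^{(\ell)}_*$ is a homomorphism of abelian groups from $\pi_n(X_c,p)^\ell$; by commutativity it is symmetric, and by associativity its restriction to a single variable (with the others set to $p$) equals a single endomorphism $\alpha=\mu(p,-)_*$ independent of $\ell$, so
\[
    \mu^{(\ell)}_*(a_1,\ldots,a_\ell)=\sum_{i=1}^\ell\alpha(a_i).
\]
Evaluating on the diagonal gives $\ell\alpha=\id$ on $\pi_n(X_c,p)$ for every prime $\ell$; comparing $\ell=2$ and $\ell=3$ forces $\alpha=0$ and hence $\id=0$, so $\pi_n(X_c,p)=0$, $X_c$ is contractible, and $X$ is discrete.

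I expect the homotopical lemma above to be the main obstacle, since it must be set up so as to apply to components $X_c$ that do not contain the multiplicative unit of $X$ and therefore only inherit a semigroup structure. The saving point is that the $\ell$-fold iterate and its linearization on $\pi_n$ are computed relative to an arbitrary basepoint $p\in X_c$ and do not require a global unit. Granting the lemma, every $F\in\An(\Fin^\op)$ is valued in discrete sets, so $\An(\Fin^\op)$ embeds into $\Fun(\Fin,\Set)$ and is in particular a $1$-category.
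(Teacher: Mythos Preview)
Your argument is correct and takes a genuinely different route from the paper's intrinsic proof. Both reduce to showing that every finite-product-preserving $F\colon\Fin\to\Sscr$ takes discrete values, and both handle $F(\emptyset)$ in the same way. For nonempty sets, the paper equips $\{0,\ldots,p-1\}$ with its $\bF_p$-group structure so that $F(\{0,\ldots,p-1\})$ becomes a grouplike $\bE_\infty$-space with $p$-torsion higher homotopy; it then uses that $\{0,\ldots,p-1\}$ is a retract of a power of $\{0,\ldots,\ell-1\}$ for a second prime $\ell$ to force $\ell$-torsion as well, and coprimality finishes. You instead put the $\min$-semilattice structure on $\{1,\ldots,n\}$ and prove directly that any idempotent commutative monoid in $\Sscr$ is discrete via the identities $\ell\alpha=\id$ for all $\ell\geq 2$. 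Your approach treats each $F(n)$ in isolation, with no need to shuttle between different finite sets via retracts; the paper's approach has the virtue of using only abelian group objects, so the Eckmann--Hilton input is entirely standard. Two small expository points: the claim that the single-variable restriction of $\mu^{(\ell)}$ at $(p,\ldots,p)$ equals $\mu(p,-)$ independently of $\ell$ uses idempotency (to collapse $\mu^{(\ell-1)}(p,\ldots,p)$ to $p$), not associativity alone; and on $\pi_1$ the displayed sum should be read multiplicatively in a possibly nonabelian group. Neither affects the diagonal computation $\alpha(a)^{\ell}=a$ for all $\ell$, which is all you use.
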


This is one of only two examples of small categories with finite coproducts whose animations are
discrete of which we are aware. The other is the $\infty$-category of animated perfect commutative
$\bF_p$-algebras, which was proved in
Bhatt--Scholze~\cite[Prop.~11.6]{bhatt-scholze-witt} and is an input into the algebraic proof of
Theorem~\ref{thm:intro}.

As a corollary, we deduce the following result.

\begin{corollary}\label{cor:intro}
    Let $X^\bullet$ be a cosimplicial finite set with limit $X^{-1}$. Let $\bP(X^\bullet)$ be its power set, which is a
    simplicial finite set. Then, the natural map $\bP(X^\bullet)\rightarrow\bP(X^{-1})$ is a colimit
    diagram in $\Sscr$.
\end{corollary}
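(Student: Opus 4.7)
The plan is to combine Theorem~\ref{thm:intro} with the universal property of the animation. The power-set functor $\bP\colon\Fin^\op\to\Sscr$, sending a finite set $S$ to the discrete anima $2^S$, extends uniquely to a sifted-colimit-preserving functor $\widetilde{\bP}\colon\An(\Fin^\op)\to\Sscr$, since $\An(\Fin^\op)=P_\Sigma(\Fin^\op)$ is the free sifted cocompletion of $\Fin^\op$ and $\Sscr$ is cocomplete.

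By Theorem~\ref{thm:intro}, $\An(\Fin^\op)$ is a $1$-category, and it hence coincides with the filtered cocompletion $\Ind(\Fin^\op)\simeq\Pro(\Fin)^\op$ identified via Stone duality in the introduction. Viewed through this equivalence, the cosimplicial finite set $X^\bullet$ is a simplicial object in $\Fin^\op\subset\An(\Fin^\op)$, and its colimit there corresponds to the limit of $X^\bullet$ in profinite sets, which we take as the meaning of $X^{-1}$.

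Applying $\widetilde{\bP}$, which preserves simplicial (and hence sifted) colimits, yields
$$
  \widetilde{\bP}(X^{-1})\;\simeq\;\colim_{[n]\in\Delta^\op}\widetilde{\bP}(X^n)\;=\;
  \colim_{[n]\in\Delta^\op}\bP(X^n)
$$
in $\Sscr$. Moreover, $\widetilde{\bP}(X^{-1})$ is computed as the filtered colimit of $\bP$ over the finite quotients of the profinite set $X^{-1}$, and so agrees with the Boolean algebra of clopen subsets of $X^{-1}$, i.e.\ with $\bP(X^{-1})$ as intended in the statement. Hence the augmented simplicial diagram $\bP(X^\bullet)\to\bP(X^{-1})$ is a colimit diagram in $\Sscr$.

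The only nontrivial step is Theorem~\ref{thm:intro} itself; what remains is bookkeeping, namely identifying the colimit of $X^\bullet$ in the $1$-category $\An(\Fin^\op)\simeq\Pro(\Fin)^\op$ with the cosimplicial limit in profinite sets, and checking that the structure maps $\widetilde{\bP}(X^n)\to\widetilde{\bP}(X^{-1})$ coincide with the natural preimage maps on power sets.
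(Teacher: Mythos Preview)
Your argument is correct and is essentially the paper's own proof, just phrased through the universal property of animation rather than through projectivity: since $\bP(-)\simeq\Map_{\An(\Fin^\op)}(\{0,1\},-)$, your sifted-colimit-preserving extension $\widetilde{\bP}$ is precisely the statement that $\{0,1\}$ is compact projective in $\An(\Fin^\op)$, which is how the paper argues via Theorem~\ref{thm:intro}, Lemma~\ref{lem:discrete_values}, and Remark~\ref{rem:an_bool}. One small simplification: the cosimplicial limit $X^{-1}$ is already finite (it is the coreflexive equalizer of $X^0\rightrightarrows X^1$), so the detour through profinite sets and clopen subsets is unnecessary, though not incorrect.
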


The corollary says in particular that $\pi_i|\bP(X^\bullet)|=0$ for $i\geq 1$ at any basepoint.
As the animation of $\Fin$ is the $\infty$-category of anima, or homotopy types, itself, we view
Theorem~\ref{thm:intro} as an obstruction to finding a meaningful notion of `coconnective' homotopy
type.

As $\Ind(\Fin^\op)\we\Pro(\Fin)^\op$ is equivalent to the category $\Bool$ of Boolean algebras,
Theorem~\ref{thm:intro} implies that $\An\Bool\we\Bool$: an animated Boolean
algebra is discrete. See Section~\ref{sec:boolean}.

{\em Added 1 August 2025.} Since the first version of this paper was submitted, another argument
for the proof of Theorem~\ref{thm:intro} was given by Lehner in~\cite[Thm.~5.6]{lehner}.

\paragraph{Conventions.} A space is a topological space. We use profinite sets and profinite spaces
interchangeably. An anima, or homotopy type, is an object of the $\infty$-category $\Sscr$ given as
the homotopy coherent nerve of the simplicial category of Kan complexes.

An anima $X\in\Sscr$ is discrete if for each base point $x\in X$ the
homotopy groups $\pi_i(X,x)$ vanish. The inclusion $\Set\rightarrow\Sscr$ is fully faithful with
essential image the full subcategory of discrete anima. An $\infty$-category $\Cscr$ is a
$1$-category if the mapping anima $\Map_\Cscr(X,Y)$ is discrete for each $X,Y\in\Cscr$.
We use $1$-category and category interchangeably.

\paragraph{Acknowledgments.}
We thank Lukas Brantner and Gijs Heuts for their organization of an enjoyable and productive
workshop in Utrecht in 2023. We would also like to thank Thomas Nikolaus for encouragement in finding a direct proof and Peter
Haine for related discussions. This paper was significantly improved thanks to comments by two
referees, whom we also thank.

We were supported by NSF grant DMS-2152235, Simons Fellowship 00005925, and the Simons
Collaboration on Perfection. This work was completed during a visit to the Max
Planck Institute for Mathematics in Bonn and written up while
on leave at UC Berkeley.

\section{Animation}

The notion of animation goes back to Quillen~\cite{quillen_homotopical} who studied simplicial model category structures on
categories of simplicial objects in various settings, such as simplicial abelian groups or
simplicial commutative rings. A detailed $\infty$-categorical treatment is given by Lurie
in~\cite[Sec.~5.5.8]{htt}. The term `animation' was introduced by \v{C}esnavi\v cius--Scholze
in~\cite{cesnavicius-scholze}.

\begin{definition}[Siftedness]
    A simplicial set $X$ is sifted if it is nonempty and if the diagonal map $\Delta_X\colon X\rightarrow X\times
    X$ is cofinal in the sense of~\cite[Def.~4.1.1.1]{htt}.
    Cofinality of $\Delta_X$ is equivalent to the requirement that for every $\infty$-category $\Cscr$ and every
    functor $F\colon X\times X\rightarrow\Cscr$ the
    colimit of $F$ can be computed by its restriction to $X$ in the sense that the natural map
    $\colim(F\circ\Delta_X)\rightarrow\colim(F)$ is an equivalence, assuming that $\colim(F)$ exists.
    See~\cite[Prop.~4.1.1.8]{htt}.
\end{definition}

\begin{example}
    The following examples are found in~\cite[Sec.~5.5.8]{htt}.
    \begin{enumerate}
        \item[(a)] A filtered $\infty$-category is sifted.
        \item[(b)] The category $\Delta^\op$ is sifted.
    \end{enumerate}
    These are in some sense the two most important examples in practice. Let 
    $F\colon\Cscr\rightarrow\Dscr$ be a functor and suppose that $\Cscr$ has all small colimits.
    Then, $F$ preserves sifted colimits if and only if it preserves filtered colimits and geometric
    realizations (colimits along $\Delta^\op$). See~\cite[Cor.~5.5.8.17]{htt}.
\end{example}

\begin{definition}[$1$-siftedness]
    A category $X$ is $1$-sifted if $X$-shaped colimits in $\Set$ commute with finite products.
\end{definition}

\begin{example}
    See~\cite{adamek-rosicky-vitale} for the following examples.
    \begin{enumerate}
        \item[(a)] Filtered categories are $1$-sifted.
        \item[(b)] The category $\Delta^{\leq 1,\op}$ is $1$-sifted.
    \end{enumerate}
    Colimits over $\Delta^{\leq 1,\op}$ are called reflexive coequalizers. Let
    $F\colon\Cscr\rightarrow\Dscr$ be a functor between $1$-categories. If $\Cscr$ has small
    colimits, then $F$ preserves $1$-sifted colimits if and only if it preserves filtered colimits
    and reflexive coequalizers by~\cite[Thm.~2.1]{adamek-rosicky-vitale}.
\end{example}

\begin{definition}
    Let $\Cscr$ be a cocomplete $\infty$-category. Say that $X\in\Cscr$ is projective if the functor
    $\Map_\Cscr(X,-)\colon\Cscr\rightarrow\Sscr$ preserves geometric realizations. If $\Cscr$ is a
    cocomplete $1$-category, say that an object $X\in\Cscr$ is $1$-projective if
    $\Hom_\Cscr(X,-)\colon\Cscr\rightarrow\Set$ preserves reflexive coequalizers.
    If additionally $\Map_\Cscr(X,-)$ preserves filtered colimits, then $X$ is called compact
    projective. In this case, $\Map_\Cscr(X,-)$ preserves all sifted colimits. Similarly, for a
    $1$-category $\Cscr$, we have
    that $X$ is compact $1$-projective if $\Hom_\Cscr(X,-)\colon\Cscr\rightarrow\Set$ 
    preserves $1$-sifted colimits.
\end{definition}

\begin{notation}
    Let $\Cscr^\sfp\subseteq\Cscr$ denote the full subcategory of compact projective objects. If
    $\Cscr$ is a $1$-category, let $\Cscr^{1\sfp}\subseteq\Cscr$ denote the full subcategory of
    compact $1$-projective objects.
\end{notation}

\begin{remark}
    If $\Cscr$ is a $1$-category, then every projective object in $\Cscr$ is $1$-projective. The
    converse is not true because $\Set$ is not closed in $\Sscr$ under geometric realizations.
\end{remark}

\begin{example}
    In $\Set$ every finite set is compact $1$-projective. The only projective object of $\Set$ is
    $\emptyset$.
\end{example}

\begin{remark}
    If $\Cscr$ is a cocomplete $\infty$-category, then $\Cscr^\sfp$ is closed under finite coproducts.
    Indeed, $\Cscr^\sfp$ contains the initial object of $\Cscr$ and if $X,Y\in\Cscr^\sfp$ and $F\colon I\rightarrow\Cscr$ is a sifted diagram, then
    \begin{align*}
        \Map_\Cscr(X\amalg Y,\colim_{i\in I}F(i))&\we\Map_\Cscr(X,\colim_{i\in
        I}F(i))\times\Map_\Cscr(Y,\colim_{i\in
        I}F(i))\\
        &\we(\colim_{i\in I}\Map_\Cscr(X,F(i)))\times(\colim_{i\in I}\Map_\Cscr(Y,F(i)))\\
        &\we\colim_{i\in
        I}(\Map_\Cscr(X,F(i))\times\Map_\Cscr(Y,F(i)))\\
        &\we\colim_{i\in I}\Map_\Cscr(X\amalg Y,F(i)),
    \end{align*}
    where the third equivalence is by siftedness.
    Similarly, if $\Cscr$ is a $1$-category with $1$-sifted colimits, then $\Cscr^{1\sfp}$ is closed under finite coproducts.
\end{remark}

We will use animation in two closely related senses.

\begin{definition}[Animation of a small category]\label{def:an_small}
    Let $\Cscr$ be a small $\infty$-category which admits finite coproducts. Its animation is
    the full subcategory
    $\An(\Cscr)=\Fun^\sqcap(\Cscr^\op,\Sscr)\subseteq\Pscr(\Cscr)=\Fun(\Cscr^\op,\Sscr)$
    of presheaves which preserve finite products.
\end{definition}

By the results of~\cite[Sec.~5.5.8]{htt}, the animation of a small $\infty$-category $\Cscr$ with finite
coproducts can be viewed as the presentable $\infty$-category obtained by freely adjoining sifted
colimits to $\Cscr$. Specifically, restriction along the Yoneda embedding
$\Cscr\rightarrow\An(\Cscr)$ induces a functor
$\Fun(\An(\Cscr),\Dscr)\rightarrow\Fun(\Cscr,\Dscr)$. If $\Dscr$ admits sifted colimits, then
by~\cite[Prop.5.5.8.15]{htt} this functor
restricts to an equivalence
\begin{equation}\label{eq:fun_sigma}
    \Fun_\Sigma(\An(\Cscr),\Dscr)\we\Fun(\Cscr,\Dscr),
\end{equation}
where
$\Fun_\Sigma(\An(\Cscr),\Dscr)\subseteq\Fun(\An(\Cscr),\Dscr)$ is the full subcategory of functors
which preserve sifted colimits. Moreover, if $\Dscr$ admits all colimits, then the equivalence~\eqref{eq:fun_sigma} restricts to an
equivalence $\Fun^\L(\An(\Cscr),\Dscr)\we\Fun^\sqcup(\Cscr,\Dscr)$ between the $\infty$-categories
of colimit-preserving functors $\An(\Cscr)\rightarrow\Dscr$ and the finite coproduct-preserving
functors $\Cscr\rightarrow\Dscr$.

\begin{definition}
    Suppose that $\Cscr$ is a cocomplete $\infty$-category. Say that $\Cscr$ is compact
    projectively generated if the natural functor $\An(\Cscr^\sfp)\rightarrow\Cscr$ given by left Kan
    extension is an equivalence. Similarly, if $\Cscr$ is a cocomplete $1$-category, say that
    $\Cscr$ is compact $1$-projectively generated if the natural functor
    $\Fun^\sqcap(\Cscr^{1\sfp,\op},\Set)\rightarrow\Cscr$ is an equivalence.
\end{definition}

\begin{remark}
    Let $\Cscr$ be a small $\infty$-category with finite coproducts. Every object of $\Cscr$ is
    compact projective when viewed as an object of $\An(\Cscr)$ via the Yoneda embedding
    $\Cscr\subseteq\An(\Cscr)$; see~\cite[Prop.~5.5.8.22]{htt}.
\end{remark}

\begin{definition}[Animation of a large $1$-category]\label{def:an_large}
    Let $\Cscr$ be a $1$-category which is compact $1$-projectively generated.
    The animation of $\Cscr$ is defined to be $\An\Cscr=\An(\Cscr^{1\sfp})$.
\end{definition}

In the situation of Definition~\ref{def:an_large}, the category $\Cscr$ is equivalent to the full
subcategory of $\An\Cscr$ corresponding to finite product-preserving functors
$\Cscr^{1\sfp}\rightarrow\Sscr$ which take discrete values. The inclusion $\Cscr\subseteq\An\Cscr$
admits a left adjoint given by applying $\pi_0$ to the values of finite product-preserving functors.

\begin{example}
    Let $k$ be a commutative ring. The categories
    $\Set$, $\Mod_k$, and $\CAlg_k$ are compact $1$-projectively generated by the categories of
    finite sets, finitely presented projective $k$-modules, and retracts of finitely presented polynomial $k$-algebras
    $k[x_1,\ldots,x_n]$.
    \begin{enumerate}
        \item[(a)] The animation of $\Set$ is $\Sscr$. Finite sets are $1$-projective in $\Set$
            but projective when viewed as objects of $\Sscr$.
        \item[(b)] The animation of $\Mod_k$ is $\D(k)_{\geq 0}$, the connective part of the
            derived $\infty$-category of $k$. This is equivalent to the $\infty$-category
            associated to the simplicial model category of simplicial $k$-modules
            by~\cite[Cor.~5.5.9.3]{htt}.
            Projective $k$-modules are
            $1$-projective in $\Mod_k$ but projective in $\D(k)_{\geq 0}$.
        \item[(c)] The animation of $\CAlg_k$ is the category of animated commutative $k$-algebras,
            by fiat. This is equivalent to the $\infty$-category associated to the simplicial model
            category of simplicial commutative $k$-algebras by~\cite[Cor.~5.5.9.3]{htt}.
            Finitely presented polynomial $k$-algebras are $1$-projective in $\CAlg_k$ but
            projective in $\An\CAlg_k$.
    \end{enumerate}
\end{example}

In the examples above, the only projective objects in $\Set$, $\Mod_k$, and $\CAlg_k$ are the
initial objects. Few projective objects in $1$-categories are known in general.

\begin{lemma}\label{lem:if_discrete}
    Suppose that $\Cscr$ is a $1$-category with all colimits and suppose that $X\in\Cscr$ is $1$-projective. If for all
    $Y_\bullet\colon\Delta^\op\rightarrow \Cscr$ the geometric realization
    $|\Hom_\Cscr(X,Y_\bullet)|$ is discrete, then $X$ is projective.
\end{lemma}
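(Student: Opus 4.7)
The plan is to verify the projectivity of $X$ directly by showing that for every simplicial object $Y_\bullet\colon\Delta^\op\to\Cscr$ the canonical comparison map
\[
    |\Map_\Cscr(X,Y_\bullet)|\to\Map_\Cscr(X,|Y_\bullet|_\Cscr)
\]
in $\Sscr$ is an equivalence, where $|Y_\bullet|_\Cscr$ denotes the geometric realization computed in the $1$-category $\Cscr$. Since $\Cscr$ is a $1$-category, both mapping anima are automatically discrete, so I can argue throughout with the set-valued $\Hom_\Cscr$.

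The key move on the right-hand side will be to reduce the $\Cscr$-valued realization to a reflexive coequalizer. The inclusion $\Delta^{\leq 1,\op}\hookrightarrow\Delta^\op$ is classically cofinal for colimits in $1$-categories, so $|Y_\bullet|_\Cscr$ agrees with the reflexive coequalizer of $d_0,d_1\colon Y_1\rightrightarrows Y_0$ with common section $s_0$. Invoking the hypothesis that $X$ is $1$-projective then gives
\[
    \Hom_\Cscr(X,|Y_\bullet|_\Cscr)\iso\mathrm{coeq}\bigl(\Hom_\Cscr(X,Y_1)\rightrightarrows\Hom_\Cscr(X,Y_0)\bigr)
\]
in $\Set$.

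The discreteness hypothesis enters on the left-hand side. By assumption $|\Hom_\Cscr(X,Y_\bullet)|$ is discrete, so it equals its $\pi_0$, which for any simplicial set $Z_\bullet$ is the set-theoretic coequalizer of $d_0,d_1\colon Z_1\rightrightarrows Z_0$. Applied to $Z_\bullet=\Hom_\Cscr(X,Y_\bullet)$, this identifies the left-hand side with the very same coequalizer that appeared on the right. Both identifications are induced by the maps $\Hom_\Cscr(X,Y_n)\to\Hom_\Cscr(X,|Y_\bullet|_\Cscr)$ coming from the structure maps $Y_n\to|Y_\bullet|_\Cscr$, so the canonical comparison map realizes the abstract isomorphism and one concludes that $X$ is projective.

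I do not anticipate any serious obstacle: every ingredient is classical, and the discreteness hypothesis is precisely the bridge from $1$-projectivity (a statement about reflexive coequalizers, i.e.\ about the $1$-truncated portion of $\Delta^\op$) to projectivity (a statement about full geometric realizations in $\Sscr$). The only ingredient worth pausing on is the $1$-cofinality of $\Delta^{\leq 1,\op}\subseteq\Delta^\op$, which follows from Quillen's Theorem A by connectedness of the relevant vertex/edge comma categories in $[n]$.
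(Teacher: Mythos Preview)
Your argument is correct and follows essentially the same route as the paper's proof: reduce the $\Cscr$-colimit $|Y_\bullet|_\Cscr$ to a reflexive coequalizer (since $\Cscr$ is a $1$-category), use the discreteness hypothesis to replace $|\Hom_\Cscr(X,Y_\bullet)|$ by its $\pi_0$ (itself a reflexive coequalizer in $\Set$), and then match the two via $1$-projectivity of $X$. The paper's version is terser but structurally identical; your added remark on $1$-cofinality of $\Delta^{\leq 1,\op}\hookrightarrow\Delta^\op$ is the only extra content.
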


\begin{proof}
    Let $Y_{-1}=|Y_\bullet|$. As $\Cscr$ is a $1$-category, $Y_{-1}$ is the reflexive coequalizer
    of $Y_1\stack{3}Y_0$.
    But, by assumption, $|\Hom_\Cscr(X,Y_\bullet)|\we\pi_0|\Hom_\Cscr(X,Y_\bullet)|$, and the
    latter is equivalent to the reflexive coequalizer of
    $\Hom_\Cscr(X,Y_1)\stack{3}\Hom_\Cscr(X,Y_0)$, which is $\Hom_\Cscr(X,Y_{-1})$, since we have
    assumed that $X$ is $1$-projective. 
\end{proof}

\begin{lemma}\label{lem:discrete_values}
    Let $\Cscr$ be a compact $1$-projectively generated $1$-category.
    The following conditions are equivalent:
    \begin{enumerate}
        \item[{\em (i)}] the natural functor
            $\pi_0\colon\Fun^\sqcap(\Cscr^{1\sfp,\op},\Sscr)\rightarrow\Fun^\sqcap(\Cscr^{1\sfp,\op},\Set)$ is an
            equivalence;
        \item[{\em (ii)}] the $\infty$-category $\An\Cscr$ is a $1$-category;
        \item[{\em (iii)}] the objects of $\Cscr^{1\sfp}$ are projective in $\Cscr$.
    \end{enumerate}
\end{lemma}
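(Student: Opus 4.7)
The plan is to prove (i) $\Leftrightarrow$ (ii) and (ii) $\Rightarrow$ (iii) as essentially formal consequences of the relevant adjunction and Yoneda, and then to treat (iii) $\Rightarrow$ (ii) as the only substantive direction.

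For (i) $\Leftrightarrow$ (ii): the inclusion $\iota\colon\Cscr\we\Fun^\sqcap(\Cscr^{1\sfp,\op},\Set)\hookrightarrow\Fun^\sqcap(\Cscr^{1\sfp,\op},\Sscr)=\An\Cscr$ induced by $\Set\subset\Sscr$ is fully faithful with left adjoint given by levelwise $\pi_0$, so (i) is literally the statement that $\iota$ is an equivalence. Now for $F\in\An\Cscr$ and $X\in\Cscr^{1\sfp}$, the value $F(X)$ agrees with $\Map_{\An\Cscr}(h_X,F)$ by Yoneda; if (ii) holds this is discrete, so $F$ is discrete-valued, hence lies in $\iota(\Cscr)$, giving essential surjectivity of $\iota$. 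Conversely, if $\iota$ is an equivalence then $\An\Cscr$ inherits being a $1$-category from $\Cscr$. For (ii) $\Rightarrow$ (iii): under $\An\Cscr\we\Cscr$, any $X\in\Cscr^{1\sfp}$ corresponds to $h_X\in\An\Cscr$, which is compact projective in $\An\Cscr$ by the remark following Definition~\ref{def:an_small}; transporting this along the equivalence shows $X$ is compact projective, hence projective, in $\Cscr$.

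The substantive direction (iii) $\Rightarrow$ (ii) is to show that $\iota(\Cscr)\subset\An\Cscr$ is closed under sifted colimits: since every object of $\An\Cscr$ is a sifted colimit of representables (each of which lies in $\iota(\Cscr)$), this gives essential surjectivity of $\iota$, i.e.\ (i), which is equivalent to (ii). Sifted colimits split into filtered colimits and geometric realizations, and filtered colimits in $\Sscr$ preserve discreteness, so it suffices to handle geometric realizations. Given a simplicial object $Y_\bullet$ in $\iota(\Cscr)$, evaluate $|Y_\bullet|_{\An\Cscr}$ at any $X\in\Cscr^{1\sfp}$: since $h_X$ is compact projective in $\An\Cscr$,
\[
    |Y_\bullet|_{\An\Cscr}(X)\we|\Map_{\An\Cscr}(h_X,Y_\bullet)|\we|\Hom_\Cscr(X,Y_\bullet)|.
\]
By (iii), $X$ is projective in $\Cscr$, so $\Hom_\Cscr(X,-)\colon\Cscr\to\Set\subset\Sscr$ preserves geometric realizations, and the last term agrees with the discrete set $\Hom_\Cscr(X,|Y_\bullet|_\Cscr)$. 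Hence $|Y_\bullet|_{\An\Cscr}$ takes discrete values at every $X\in\Cscr^{1\sfp}$, so it lies in $\iota(\Cscr)$.

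The main obstacle is keeping straight the two geometric realizations at play in the key chain of equivalences---one computed in $\An\Cscr$, where compact projectivity of $h_X$ allows the realization to commute past $\Map_{\An\Cscr}(h_X,-)$, and one computed in $\Cscr$, where projectivity of $X$ as supplied by (iii) is needed to conclude discreteness---and verifying they really connect as claimed after evaluation at $X$.
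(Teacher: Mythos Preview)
Your argument is correct and follows essentially the same route as the paper's proof: both reduce the substantive direction to evaluating a geometric realization $|Y_\bullet|_{\An\Cscr}$ at a representable, using compact projectivity of $h_X$ in $\An\Cscr$ to commute the realization past $\Map$, and then invoking projectivity of $X$ in $\Cscr$ to identify the result with a discrete $\Hom$-set. The only organizational differences are that the paper cycles (i)$\Rightarrow$(ii)$\Rightarrow$(iii)$\Rightarrow$(i) rather than proving (i)$\Leftrightarrow$(ii) directly, and that for (ii)$\Rightarrow$(iii) the paper phrases things via Lemma~\ref{lem:if_discrete} whereas you transport projectivity along the equivalence $\An\Cscr\we\Cscr$; these amount to the same content.
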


Condition (i) may be rephrased as the condition that $\pi_0\colon\An\Cscr\rightarrow\Cscr$ is an
equivalence.

\begin{proof}[Proof of Lemma~\ref{lem:discrete_values}]
    As $\Fun^\sqcap(\Cscr^{1\sfp,\op},\Set)$ is a $1$-category, condition (i) implies (ii).
    Suppose that $\An\Cscr$ is a $1$-category. 
    Since $\An\Cscr$ is a $1$-category, $\Map_{\An\Cscr}(X,Y)$ is discrete for every
    $X\in\Cscr^{1\sfp}$. In particular, by Lemma~\ref{lem:if_discrete}, $X$ is projective: (ii)
    implies (iii). Now, assume that every object $X$ of $\Cscr^{1\sfp}$ is projective.
    Every object $Y$ of $\An\Cscr$
    can be written as a geometric realization $Y\we|Z_\bullet|$ in $\Fun(\Cscr^{1\sfp,\op},\Sscr)$
    of a simplicial object where each $Z_n\in\Ind(\Cscr^{1\sfp})\subseteq\Cscr$. See~\cite[Lem.~5.5.8.14]{htt}.
    Now, discreteness of $\Map_{\An\Cscr}(X,Y)$ follows from projectivity of $X$. It follows that
    $\pi_0$ is an equivalence.
\end{proof}

\section{Boolean rings}\label{sec:boolean}

The category $\Pro(\Fin)^\op\we\Ind(\Fin^\op)$ is equivalent to the category
$\CAlg_{\bF_2}^{\varphi=1}$ of ($2$-)Boolean rings and
to the category $\Bool$ of Boolean algebras. The equivalence is implemented by
the adjoint equivalence
$$\Cont(-,\bF_p)\colon\Pro(\Fin)^\op\rightleftarrows\CAlg_{\bF_2}^{\varphi=1}\colon\Spec,$$
where $\Cont(-,\bF_p)$ is the continuous $\bF_2$-valued functions functor.

\begin{lemma}\label{lem:1projective}
    Suppose that $X$ is a nonempty finite set. Then, it is $1$-projective when viewed as an object of
    $\Ind(\Fin^\op)$.
\end{lemma}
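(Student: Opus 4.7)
The plan is to transport the question across Stone duality and reduce it to a general fact about varieties of algebras. Under $\Ind(\Fin^\op) \we \Bool$ the finite set $X$, viewed via the Yoneda embedding $\Fin^\op \hookrightarrow \Ind(\Fin^\op)$, corresponds to the power Boolean algebra $\bF_2^X$, so it suffices to show that $\bF_2^X$ is $1$-projective in $\Bool$ whenever $X$ is nonempty.

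The first step is to exhibit $\bF_2^X$ as a retract of a free Boolean algebra. Pick $k$ with $2^k \geq |X|$. Using that $X$ is nonempty, one can choose an injection $i \colon X \hookrightarrow \{0,1\}^k$ of finite sets together with a retraction $r \colon \{0,1\}^k \to X$ (for instance, collapse the complement of $i(X)$ onto a fixed element of $X$), so that $r \circ i = \id_X$. Applying the contravariant functor $\bF_2^{(-)} \colon \Fin^\op \to \Bool$ then produces Boolean algebra maps $r^* \colon \bF_2^X \to \bF_2^{\{0,1\}^k}$ and $i^* \colon \bF_2^{\{0,1\}^k} \to \bF_2^X$ with $i^* \circ r^* = \id$.

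The second step is to recognize $\bF_2^{\{0,1\}^k}$ as the free Boolean algebra on $k$ generators. For any $A \in \Bool$, Stone duality and the fact that every element of a Boolean ring is idempotent give
\[
    \Hom_\Bool(\bF_2^{\{0,1\}^k}, A) \;=\; \Hom_{\Pro(\Fin)}(\Spec A, \{0,1\})^k \;=\; A^k
\]
naturally in $A$. Thus $\Hom_\Bool(\bF_2^{\{0,1\}^k}, -)$ is $U^k$, the $k$-th power of the forgetful functor $U \colon \Bool \to \Set$.

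To finish, recall that since $\Bool$ is a variety of algebras, $U$ preserves reflexive coequalizers (in fact all sifted colimits; compare~\cite[Thm.~2.1]{adamek-rosicky-vitale}), and so does $U^k$. Hence $\bF_2^{\{0,1\}^k}$ is $1$-projective, and $\bF_2^X$ inherits $1$-projectivity as a retract, since $\Hom_\Bool(\bF_2^X,-)$ is then a retract of $U^k$. The only real content is the retract construction, which is where nonemptiness of $X$ enters; the rest is formal.
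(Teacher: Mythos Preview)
Your proof is correct and follows essentially the same approach as the paper: pass via Stone duality to Boolean algebras, identify $\{0,1\}^k$ with the free Boolean algebra on $k$ generators, and then use the retract argument (which is where nonemptiness enters) to handle a general $X$. The only minor difference is in how $1$-projectivity of the free objects is justified: the paper works in $\CAlg_{\bF_2}^{\varphi=1}$ and transports $1$-projectivity of polynomial rings across the left adjoint $(-)_{\varphi=1}\colon\CAlg_{\bF_2}\rightarrow\CAlg_{\bF_2}^{\varphi=1}$ (whose right adjoint preserves colimits), whereas you invoke directly that $\Bool$ is a variety so its forgetful functor to $\Set$ preserves reflexive coequalizers---both are standard and amount to the same thing.
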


\begin{proof}
    We use the incarnation of $\Ind(\Fin^\op)$ as $\CAlg_{\bF_2}^{\varphi=1}$. This category admits
    a conservative forgetful functor to $\CAlg_{\bF_2}$ which admits both a left and a right
    adjoint. In particular, the forgetful functor preserves all colimits, so the left adjoint
    $(-)_{\varphi=1}\colon\CAlg_{\bF_2}\rightarrow\CAlg_{\bF_2}^{\varphi=1}$ preserves compact $1$-projective objects.
    Since $\CAlg_{\bF_2}$ is compact $1$-projectively generated, it follows that
    $\CAlg_{\bF_2}^{\varphi=1}$ is compact $1$-projectively generated too. Given
    a compact $1$-projective generator
    $\bF_2[x_1,\ldots,x_n]$ of $\CAlg_{\bF_2}$ for some integer $n\geq 0$, we have
    $$(\bF_2[x_1,\ldots,x_n])_{\varphi=1}\iso\bF_2[x_1,\ldots,x_n]/(x_1^2-x_1,\ldots,x_n^2-x_n),$$
    which corresponds to the (opposite) finite set $\{0,1\}^n$. Thus, $\{0,1\}^n$ is $1$-projective
    in $\Ind(\Fin^\op)$. As every nonempty finite set is a retract of some $\{0,1\}^n$, every
    nonempty finite set is $1$-projective in $\Ind(\Fin^\op)$.
\end{proof}

\begin{remark}
    The empty set $\emptyset\in\Ind(\Fin^\op)$ is not $1$-projective.
\end{remark}

\begin{remark}\label{rem:an_bool}
    The animation $\An\Ind(\Fin^\op)$ is thus equivalent to $\An(\Fin^\op_{\geq 1})$, the animation
    of the opposite of the category of nonempty finite sets. The inclusion $\Fin^\op_{\geq
    1}\rightarrow\Fin^\op$ induces a functor $\An\Ind(\Fin^\op_{\geq 1})\rightarrow\An(\Fin^\op)$,
    which is fully faithful by~\cite[Prop.~5.5.8.22]{htt}. As the latter is a $1$-category by
    Theorem~\ref{thm:intro}, it follows that $\An\Ind(\Fin^\op)$ is a $1$-category and in
    particular that each nonempty set $X$ is projective in $\Ind(\Fin^\op)$ thanks to
    Lemma~\ref{lem:discrete_values}.
\end{remark}

\section{The algebraic argument}

We start by comparing the perfect animated $\bF_p$-algebras and the animation of the category of
perfect $\bF_p$-algebras.

\begin{construction}
    Recall that any commutative $\bF_p$-algebra $R$ admits a Frobenius endomorphism $\varphi\colon
    R\rightarrow R$ given by $\varphi(x)=x^p$. The Frobenius defines a natural endomorphism, also
    denoted $\varphi$, of the
    identity functor on $\CAlg_{\bF_p}$ and also of the identity functor on $\CAlg_{\bF_p}^{1\sfp}$.
    As $\An\CAlg_{\bF_p}\we\Fun^\sqcap(\CAlg_{\bF_p}^{1\sfp,\op},\Sscr)$, composition with
    $\varphi^\op$ induces a natural transformation $\varphi$ of the identity functor on $\An\CAlg_{\bF_p}$.
\end{construction}

\begin{definition}
    An animated commutative $\bF_p$-algebra $R$ is perfect if $\varphi\colon R\rightarrow R$ is an
    equivalence. Let $(\An\CAlg_{\bF_p})^{\perf}\subseteq\An\CAlg_{\bF_p}$ denote the full
    subcategory of perfect animated commutative $\bF_p$-algebras.
\end{definition}

On the other hand, we have the animated perfect $\bF_p$-algebras.

\begin{definition}
    The full subcategory $\CAlg_{\bF_p}^{\perf}\subseteq\CAlg_{\bF_p}$ is compact $1$-projectively
    generated. Compact $1$-projective generators are given by the perfections
    $\bF_p[x_1^{1/p^\infty},\ldots,x_n^{1/p^\infty}]$ of the finitely presented polynomial
    $\bF_p$-algebras. The $\infty$-category of animated perfect $\bF_p$-algebras is
    $\An\CAlg_{\bF_p}^\perf$.
\end{definition}

As on $\An(\CAlg_{\bF_p})$ there is a Frobenius endomorphism of the identity functor of
$\An(\CAlg_{\bF_p}^\perf)$ which is in fact a natural automorphism.

There is a functor
$$\An\CAlg_{\bF_p}^\perf\rightarrow\An\CAlg_{\bF_p}$$
obtained by left Kan extension of the inclusion
$\CAlg_{\bF_p}^{\perf,\sfp}\rightarrow\CAlg_{\bF_p}\subseteq\An\CAlg_{\bF_p}$.

The following lemma was proved in the course of the proof of~\cite[Lem.~2.20]{antieau_spherical}.
It says that animated perfect commutative $\bF_p$-algebras are equivalent to perfect animated
commutative $\bF_p$-algebras.

\begin{lemma}
    The functor $\An\CAlg_{\bF_p}^\perf\rightarrow\An\CAlg_{\bF_p}$ is fully faithful with
    essential image $(\An\CAlg_{\bF_p})^\perf$.
\end{lemma}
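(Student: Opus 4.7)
The plan is to argue in two stages: first identify the essential image, then show that $F$ is an equivalence onto it.

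\textbf{Stage 1 (essential image).} Since Frobenius $\varphi$ is a natural transformation of $\id_{\An\CAlg_{\bF_p}}$, the full subcategory $(\An\CAlg_{\bF_p})^\perf$ is closed under arbitrary colimits in $\An\CAlg_{\bF_p}$. The functor $F$ preserves colimits by construction, and it sends each compact projective generator $A_n = \bF_p[x_1^{1/p^\infty},\ldots,x_n^{1/p^\infty}]$ into $\CAlg_{\bF_p}^\perf \subseteq (\An\CAlg_{\bF_p})^\perf$. Hence $F$ corestricts to a colimit-preserving functor $F' \colon \An\CAlg_{\bF_p}^\perf \to (\An\CAlg_{\bF_p})^\perf$.

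\textbf{Stage 2 (equivalence).} By the universal property of animation, $F'$ is an equivalence provided that the restriction $\CAlg_{\bF_p}^{\perf,\sfp} \to (\An\CAlg_{\bF_p})^\perf$ is fully faithful, lands in compact projective objects of the target, and has image generating the target under sifted colimits. Fully faithfulness is immediate since both sides embed fully faithfully into $\An\CAlg_{\bF_p}$. For the compact projectivity of $A_n$ in $(\An\CAlg_{\bF_p})^\perf$, use that $A_n \simeq \colim_\varphi \bF_p[x_1,\ldots,x_n]$ in $\An\CAlg_{\bF_p}$ to get
$$\Map_{\An\CAlg_{\bF_p}}(A_n, B) \simeq \lim_\varphi (\Omega^\infty B)^n;$$
for perfect $B$, the Frobenius acts invertibly, so this limit collapses to $(\Omega^\infty B)^n$, a functor which preserves sifted colimits since these are computed on underlying anima. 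For generation under sifted colimits, the perfection $(-)^\perf = \colim_\varphi$ is left adjoint to the inclusion $(\An\CAlg_{\bF_p})^\perf \hookrightarrow \An\CAlg_{\bF_p}$ and therefore preserves colimits; writing a perfect $B \simeq \widetilde{B}^\perf$ with $\widetilde{B} = \colim_i \bF_p[x_1,\ldots,x_{n_i}]$ a sifted colimit in $\An\CAlg_{\bF_p}$ yields $B \simeq \colim_i A_{n_i}$, the same diagram viewed in $(\An\CAlg_{\bF_p})^\perf$.

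The main obstacle is the compact projectivity step, which rests entirely on the observation that the Frobenius inverse limit defining $\Map$ out of a perfection trivializes on perfect targets; once that is in hand, the rest is a routine application of the universal property of animation.
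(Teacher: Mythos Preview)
Your argument is correct. Note that the paper does not actually supply a proof of this lemma; it simply records that the statement was established in the course of proving \cite[Lem.~2.20]{antieau_spherical}. So there is no in-paper argument to compare against, and your write-up provides more than the paper itself does.

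Two minor remarks on presentation. First, the phrase ``by the universal property of animation'' in Stage~2 is shouldering a lot of weight; what you are really invoking is the recognition principle for compact projectively generated $\infty$-categories (essentially \cite[Prop.~5.5.8.22 and~5.5.8.25]{htt}): once the $A_n$ are shown to be compact projective in $(\An\CAlg_{\bF_p})^\perf$, to generate it under sifted colimits, and the inclusion $\CAlg_{\bF_p}^{\perf,\sfp}\hookrightarrow(\An\CAlg_{\bF_p})^\perf$ is fully faithful and coproduct-preserving, the induced functor from the animation is forced to be an equivalence. Second, for the generation step it would help to say explicitly that $(\An\CAlg_{\bF_p})^\perf$ is presentable---it is an accessible reflective localization of $\An\CAlg_{\bF_p}$ via the perfection you construct---so that the colimit and adjoint-functor manipulations you use are legitimate. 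Both points are implicit in what you wrote; making them explicit removes any residual doubt.
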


The following was established in~\cite[Prop.~11.6]{bhatt-scholze-witt}.

\begin{lemma}\label{lem:animated_perfect_discrete}
    If $R\in(\An\CAlg_{\bF_p})^\perf$, then its underlying object in the derived $\infty$-category $\D(\bF_p)$ is discrete:
    $\pi_iR=0$ for $i\neq 0$.
\end{lemma}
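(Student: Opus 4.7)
The plan is to reduce the statement to a key vanishing property of the Frobenius on positive homotopy: \emph{for every animated commutative $\bF_p$-algebra $R$ and every $i>0$, the induced map $\varphi_{R,*}\colon\pi_iR\to\pi_iR$ is zero}. Granting this, the lemma follows at once: for $R\in(\An\CAlg_{\bF_p})^\perf$ the Frobenius $\varphi_R$ is an equivalence by definition, so $\varphi_{R,*}$ is an automorphism of $\pi_iR$ for each $i$; combined with the vanishing, this forces $\pi_iR=0$ for $i>0$. Connectivity of animated objects gives $\pi_iR=0$ for $i<0$ as well, so $R$ is discrete as an object of $\D(\bF_p)$.

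The vanishing of $\varphi_{R,*}$ on positive homotopy is reduced to a computation on free algebras as follows. Since $\An\CAlg_{\bF_p}$ is generated under sifted colimits by the free objects $A_n=\LSym_{\bF_p}(\bF_p[n])$ for $n\geq 0$, any class $x\in\pi_iR$ with $i>0$ is the image of the tautological generator $x_i\in\pi_iA_i$ along some map $A_i\to R$ of animated $\bF_p$-algebras. By naturality of Frobenius, $\varphi_{R,*}(x)$ is the image of $\varphi_{A_i,*}(x_i)$. Thus it suffices to prove that $\varphi_{A_i,*}(x_i)=0$ in $\pi_iA_i$ for every $i>0$.

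The main obstacle is precisely this last computational step. I would attempt it by working in a Dold--Kan model of $A_i$ as a simplicial commutative $\bF_p$-algebra, where the Frobenius acts levelwise by the classical $p$-th power map, and checking directly that raising a positive-degree representing cycle to its $p$-th power contributes trivially to $\pi_i$. Conceptually, this reflects the fact that the Frobenius on $A_i$ factors through the derived $p$-th symmetric power of $\bF_p[i]$, whose degree-$i$ homotopy vanishes in characteristic $p$ by a $\Sigma_p$-Tate-cohomology calculation. This is essentially the content of~\cite[Prop.~11.6]{bhatt-scholze-witt}, and I would refer there for the details rather than redo the computation.
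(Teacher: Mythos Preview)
Your approach is correct and, like the paper's, ultimately defers to Bhatt--Scholze for the key computation; the difference is organizational. You prove the universal statement that $\varphi_*$ vanishes on $\pi_i$ for every $R$ and every $i>0$, reducing via the free--forgetful adjunction to the tautological class $x_i\in\pi_iA_i$ and then invoking a $\Sigma_p$-Tate/$\LSym^p$ argument uniformly in $i$. The paper's sketch instead handles only $i=1$ directly (where $A_1\simeq\bF_p\oplus\bF_p[1]$, so $\varphi_*(x_1)=0$ is immediate), concludes that the perfection of $A_1$ is $\bF_p$, and then bootstraps to all $n\geq 1$ via the Bar construction identity $A_{n+1}\simeq\bF_p\otimes_{A_n}\bF_p$ together with the fact that perfection commutes with colimits. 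Your route makes the reduction to free objects explicit; the paper's route reduces everything to a single elementary computation rather than an infinite family.

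One small correction: the clause ``generated under sifted colimits by the free objects $A_n=\LSym_{\bF_p}(\bF_p[n])$'' is not the right justification---the compact projective generators of $\An\CAlg_{\bF_p}$ are the polynomial rings $\LSym_{\bF_p}(\bF_p^n)$, not the $A_n$. Your reduction is nonetheless valid, since what you actually use is the adjunction $\Map_{\An\CAlg_{\bF_p}}(A_i,R)\simeq\Map_{\D(\bF_p)_{\geq 0}}(\bF_p[i],R)$, which is what lets you represent any $x\in\pi_iR$ by a map $A_i\to R$.
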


\begin{proof}[Sketch of proof]
    It suffices to prove that the Frobenius induces the $0$ map on $\pi_1$ of
    $\LSym_{\bF_p}(\bF_p[1])$. Then, its perfection is $\bF_p$. It follows by taking Bar
    constructions that the perfections of
    all $\LSym_{\bF_p}(\bF_p[n])$ are equivalent to $\bF_p$ for $n\geq 1$.
\end{proof}

\begin{corollary}
    If $R=\bF_p[x_1,\ldots,x_n]$ for some integer $n\geq 0$, then the perfection $R_\perf$ of $R$
    is a projective object of $\CAlg_{\bF_p}^\perf$. Thus, $\An\CAlg_{\bF_p}^\perf$ is a $1$-category.
\end{corollary}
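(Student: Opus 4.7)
The plan is to apply Lemma~\ref{lem:if_discrete} to deduce projectivity of $R_\perf$ from the discreteness statement of Lemma~\ref{lem:animated_perfect_discrete}, and then to invoke Lemma~\ref{lem:discrete_values} to upgrade this to the $1$-categoricity of $\An\CAlg_{\bF_p}^\perf$.

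First I would observe that $R_\perf=\bF_p[x_1^{1/p^\infty},\ldots,x_n^{1/p^\infty}]$ lies in $\CAlg_{\bF_p}^{\perf,\sfp}$; indeed it is among the declared compact projective generators of $\CAlg_{\bF_p}^\perf$, and in particular it is compact $1$-projective there. By Lemma~\ref{lem:if_discrete} it then suffices, in order to conclude that $R_\perf$ is projective in $\CAlg_{\bF_p}^\perf$, to verify that for every simplicial object $Y_\bullet\colon\Delta^\op\rightarrow\CAlg_{\bF_p}^\perf$ the geometric realization $|\Hom_{\CAlg_{\bF_p}^\perf}(R_\perf,Y_\bullet)|$ is discrete as an anima.

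To evaluate this geometric realization I would pass to $\An\CAlg_{\bF_p}^\perf$. Since $R_\perf\in\CAlg_{\bF_p}^{\perf,\sfp}$, it is compact projective when viewed inside $\An\CAlg_{\bF_p}^\perf$, so that
\[
    |\Hom_{\CAlg_{\bF_p}^\perf}(R_\perf,Y_\bullet)|\we\Map_{\An\CAlg_{\bF_p}^\perf}(R_\perf,|Y_\bullet|),
\]
where $|Y_\bullet|$ denotes the geometric realization computed in $\An\CAlg_{\bF_p}^\perf$. Via the fully faithful embedding $\An\CAlg_{\bF_p}^\perf\hookrightarrow(\An\CAlg_{\bF_p})^\perf$ of the preceding lemma, $|Y_\bullet|$ is a perfect animated commutative $\bF_p$-algebra. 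By Lemma~\ref{lem:animated_perfect_discrete}, $|Y_\bullet|$ is then discrete, i.e.~it is obtained from an ordinary perfect $\bF_p$-algebra $Z$. Consequently $\Map_{\An\CAlg_{\bF_p}^\perf}(R_\perf,|Y_\bullet|)=\Hom_{\CAlg_{\bF_p}^\perf}(R_\perf,Z)$ is a set. This establishes the required discreteness, and Lemma~\ref{lem:if_discrete} yields projectivity of $R_\perf$.

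For the concluding assertion, any object of $\CAlg_{\bF_p}^{\perf,1\sfp}$ is a retract of some $R_\perf$ with $R=\bF_p[x_1,\ldots,x_n]$, and retracts of projective objects are projective, so every compact $1$-projective object of $\CAlg_{\bF_p}^\perf$ is projective. Applying the implication (iii)$\Rightarrow$(ii) of Lemma~\ref{lem:discrete_values} then shows that $\An\CAlg_{\bF_p}^\perf$ is a $1$-category. The main obstacle in executing this outline is really packaged inside Lemma~\ref{lem:animated_perfect_discrete}, whose proof requires the Frobenius vanishing on $\pi_1\LSym_{\bF_p}(\bF_p[1])$ together with the Bar-construction bootstrap; everything else is a formal assembly of the results already in place.
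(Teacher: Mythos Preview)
Your proposal is correct and follows essentially the same approach as the paper: both arguments reduce to Lemma~\ref{lem:animated_perfect_discrete} to see that mapping spaces out of the generators $R_\perf$ in $\An\CAlg_{\bF_p}^\perf$ are discrete, and then invoke Lemma~\ref{lem:discrete_values}. The only cosmetic difference is that the paper reduces to $n=1$ and uses the identification $\Map_{\An\CAlg_{\bF_p}^\perf}(\bF_p[x^{1/p^\infty}],S)\we S$ (so that $\pi_i$ of the mapping space equals $\pi_i S$), whereas you instead use that the target $|Y_\bullet|$ is discrete and appeal to full faithfulness of $\CAlg_{\bF_p}^\perf\subseteq\An\CAlg_{\bF_p}^\perf$; both are equally valid ways of extracting the conclusion from Lemma~\ref{lem:animated_perfect_discrete}.
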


\begin{proof}
    By Lemma~\ref{lem:discrete_values}, it is enough to see that for compact projective objects
    $R_\perf$ of $\An\CAlg_{\bF_p}^\perf$ we have that $\Map_{\An\CAlg_{\bF_p}^\perf}(R_\perf,-)$
    take discrete values. It is enough to handle the case $R=\bF_p[x]$. Then,
    $\pi_i\Map_{\An\CAlg_{\bF_p}^\perf}(R_\perf,S)\iso\pi_i S$. We have that $\pi_iS=0$ for $i\neq
    0$ by Lemma~\ref{lem:animated_perfect_discrete}.
\end{proof}

We need the following lemma to handle a boundary case involving $\emptyset$ in both of our proofs
of Theorem~\ref{thm:intro}.

\begin{lemma}\label{lem:null_set}
    Consider the full subcategory $\Cscr$ of $\Fun(\Delta^0,\Sscr)\we\Sscr$ on those functors which
    preserve finite nonempty products. Then, $\Cscr\we\{\emptyset,\ast\}\subseteq\Sscr$.
\end{lemma}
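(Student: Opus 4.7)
The plan is to unwind what the condition says in the simplest possible terms. Under the canonical equivalence $\Fun(\Delta^0,\Sscr)\we\Sscr$ given by evaluation at the unique object, a functor $F$ corresponds to an anima $X=F(\ast)$. A nonempty finite discrete diagram in $\Delta^0$ of arity $n\geq 1$ has product equal to the unique object, whose image under $F$ is $X$, while the product of the images is $X^n$; the comparison map is precisely the diagonal $\Delta_n\colon X\to X^n$ induced by the projections. By induction from the cases $n=1$ (which is trivial) and $n=2$, the functor $F$ preserves all finite nonempty products if and only if the binary diagonal $\Delta\colon X\to X\times X$ is an equivalence in $\Sscr$.

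It therefore suffices to show that $\Delta\colon X\to X\times X$ is an equivalence if and only if $X$ is either empty or contractible. The forward direction $\{\emptyset,\ast\}\subseteq\Cscr$ is immediate: both $\emptyset\to\emptyset\times\emptyset=\emptyset$ and $\ast\to\ast\times\ast=\ast$ are equivalences.

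For the converse, suppose $\Delta$ is an equivalence. If $X$ is empty, there is nothing to do. Otherwise pick a basepoint $x\in X$, and observe that the second projection $p_2\colon X\times X\to X$ is a retraction of $\Delta$, since $p_2\circ\Delta=\id_X$. By the two-out-of-three property, $p_2$ is itself an equivalence. But the fiber of $p_2$ over $x$ is $X$, so $X$ is contractible.

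There is no real obstacle here; the only point worth double-checking is that the comparison map from $F$ of the product to the product of $F$'s really does work out to the diagonal, which follows from the fact that all projection maps in $\Delta^0$ are the identity. The identification of anima $X$ with $\Delta\colon X\to X\times X$ an equivalence as $\{\emptyset,\ast\}$ is the usual characterization of $(-1)$-truncated, or subterminal, objects of $\Sscr$.
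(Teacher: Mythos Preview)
Your proof is correct and follows the same opening observation as the paper: the product-preservation condition on a functor $\Delta^0\rightarrow\Sscr$ with value $X$ amounts to the diagonal $\Delta\colon X\rightarrow X\times X$ being an equivalence, or equivalently to either projection $X\times X\rightarrow X$ being an equivalence.

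Where you diverge slightly is in how you deduce that $X$ is contractible when nonempty. The paper argues pointwise on homotopy groups: applying $\pi_0$ gives $\pi_0(X)\times\pi_0(X)\cong\pi_0(X)$ via projection, forcing $X$ to be connected; then applying $\pi_n$ for $n\geq 1$ gives $\pi_n(X,x)\times\pi_n(X,x)\cong\pi_n(X,x)$ via projection, whose kernel is $\pi_n(X,x)$ itself, so $\pi_n(X,x)=0$. You instead invoke two-out-of-three to conclude that $p_2$ is an equivalence and then read off contractibility of $X$ from the fiber of $p_2$. Your route is a bit cleaner in that it avoids the case split between $\pi_0$ and higher $\pi_n$ and works uniformly at the level of anima; it is also the standard characterization of $(-1)$-truncated objects, as you note. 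The paper's argument has the minor virtue of being entirely elementary, using nothing beyond the definition of homotopy groups.
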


\begin{proof}
    Let $0\in\Delta^0$. Then, we have $0\times 0\iso 0$ via either projection map. It follows that
    if $X\in\Cscr$, then $X\times X\we X$ via either projection map. By taking $\pi_0$, it follows
    that $X$ is empty or connected. Then, fixing a base point $x\in X$ and an integer $n\geq 1$, we obtain
    $\pi_n(X,x)\times\pi_n(X,x)\iso\pi_n(X,x)$ via either projection. But, the kernel of this map
    is $\pi_n(X,x)$, which must thus be zero for all $n\geq 1$. Hence, $X$ is either empty or is equivalent
    to $\ast$.
\end{proof}

\begin{proof}[Algebraic proof of Theorem~\ref{thm:intro}]
    By Lemma~\ref{lem:discrete_values}, it is enough to show that any functor
    $X\colon\Fin\rightarrow\Sscr$ which preserves finite products takes discrete values.
    There is a functor $\CAlg_{\bF_p}^{\perf,\sfp,\op}\rightarrow\Fin$ given by taking $R$ to
    $\Spec(R_{\varphi=1})$. On $\bF_p[x_1^{1/p^\infty},\ldots,x_n^{1/p^\infty}]$ this is
    $\Spec\bF_p[x_1,\ldots,x_n]/(x_1^p-x_1,\ldots,x_n^p-x_n)$, which is bijective to
    $\{0,\ldots,p-1\}^{n}$. This functor preserves products. It follows from
    Lemma~\ref{lem:animated_perfect_discrete} that the composition
    $\CAlg_{\bF_p}^{\perf,\sfp,\op}\rightarrow\Fin\xrightarrow{X}\Sscr$ takes discrete values.
    In particular, $X(S)$ is discrete for any finite $S$ which is a retract of $\{0,\ldots,p-1\}^n$ for
    some $n\geq 1$, in other words for any finite nonempty set $S$.
    To see that $X(\emptyset)$ is discrete, we can
    use that the inclusion $\Delta^0\we\{\emptyset\}\subseteq\Fin$ is
    closed under nonempty finite products so that the value of $X$ at $\emptyset$ must be
    $\emptyset$ or $\ast$ by Lemma~\ref{lem:null_set}.
\end{proof}

\section{The intrinsic argument}

\begin{proof}[Proof of Theorem~\ref{thm:intro}]
    Consider a product preserving functor $X\colon\Fin\rightarrow\Sscr$. It suffices by
    Lemma~\ref{lem:discrete_values} to prove that
    $X$ takes discrete values. As $X$ is a functor, it preserves retracts. Thus, if $S$ is a
    retract of $T^n$, then $X(S)$ is a retract of $X(T^n)\we X(T)^n$.

    As $X$ preserves finite products, it preserves grouplike $\bE_\infty$-objects. In particular,
    if a prime $p$ is fixed,
    as $\bF_p$ is the structure of a grouplike $\bE_\infty$-object on the set $\{0,\ldots,p-1\}$, the
    value $X(\{0,\ldots,p-1\})$ admits the structure of a grouplike $\bE_\infty$-object in $\Sscr$
    on which $p\we 0$. It follows that the higher homotopy groups of $X(\{0,\ldots,p-1\})$ at any
    basepoint are annihilated by $p$. If $\ell\neq p$ is another prime, then $\{0,\ldots,p-1\}$ is a
    retract, as a set, of $\{0,\ldots,\ell-1\}^n$ for a suitable integer $n\geq 0$. Thus, the homotopy
    groups of $X(\{0,\ldots,p-1\})$ are also annihilated by $\ell$. As $(p,\ell)=1$, we see that
    $\pi_i(X(\{0,\ldots,p-1\}))=0$ for $i\geq 1$ at any basepoint.
    For any nonempty set $S$, $X(S)$ is a retract of $X(\{0,\ldots,p-1\})^m$ for a suitable $m\geq
    1$. Thus, $X(S)$ is discrete if $S$ is nonempty. For the empty set, we argue as
    in the algebraic proof using Lemma~\ref{lem:null_set}.
\end{proof}

\begin{proof}[Proof of Corollary~\ref{cor:intro}]
    It follows from Theorem~\ref{thm:intro}, Lemma~\ref{lem:discrete_values}, and
    Remark~\ref{rem:an_bool}
    that in $\Ind(\Fin^\op)$ the object $\{0,1\}$ is projective and not merely
    $1$-projective. The corollary follows as $\{0,1\}$ corepresents the power set functor on
    $\Fin^\op$.
\end{proof}

\begin{remark}[Continuous power sets]
    More generally, suppose that $X^\bullet$ is a cosimplicial profinite space. Let
    $\bP_{\cts}(X^\bullet)$ denote the associated simplicial set. Then, $|\bP_\cts(X^\bullet)|$ is
    discrete. Here, if $Y$ is a profinite space, then its continuous power set $\bP_\cts(Y)$ is
    defined as $\colim_{i\in I}\bP(Y_i)$, where $Y\iso\lim_{i\in I}Y_i$ is a cofiltered limit of
    finite sets presenting $Y$. (Thus, $\bP_\cts(Y)$ is the Boolean algebra of clopen subsets of
    $Y$.)
\end{remark}

\begin{remark}
    It appears to be easier to prove Corollary~\ref{cor:intro} via the implication (i) implies
    (iii) of Lemma~\ref{lem:discrete_values} than checking it directly.
\end{remark}

\begin{remark}
    The animation of $\Fin^\op$ is philosophically related to the opposite of some homotopy theory of
    cosimplicial sets. The problem is: what notion of weak equivalence does one put on this
    category of cosimplicial sets? The only natural ones we know of are given by taking cohomology where one
    then obtains various notions of completed anima depending on the cohomology theory. 
\end{remark}

\begin{remark}
    The central feature of the proof of Theorem~\ref{thm:intro} is a sufficient supply of internal
    abelian group objects of various characteristics.
\end{remark}

\begin{remark}
    Let $\Ab^\fin$ denote the abelian category of finite groups. It admits a functor
    $\Ab^\fin\rightarrow\Fin$ which forgets the group structure. However, a typical finite
    product-preserving functor $\Ab^\fin\rightarrow\Sscr$ does not factor through $\Fin$ and in
    particular need not take on discrete values. An example is the classifying anima functor
    sending a finite abelian group $A$ to $\B A$. We see that the animation of $\Ab^{\fin,\op}$ is
    not a $1$-category.
\end{remark}

\small
\bibliographystyle{amsplain}
\bibliography{finop}

\providecommand{\bysame}{\leavevmode\hbox to3em{\hrulefill}\thinspace}
\providecommand{\MR}{\relax\ifhmode\unskip\space\fi MR }
\providecommand{\MRhref}[2]{%
  \href{http://www.ams.org/mathscinet-getitem?mr=#1}{#2}
}
\providecommand{\href}[2]{#2}
\begin{thebibliography}{10}

\bibitem{adamek-rosicky-vitale}
J.~Ad\'amek, J.~Rosick\'y, and E.~M. Vitale, \emph{What are sifted colimits?},
  Theory Appl. Categ. \textbf{23} (2010), No. 13, 251--260. \MR{2720191}

\bibitem{antieau_spherical}
Benjamin Antieau, \emph{Spherical {W}itt vectors and integral models for
  spaces}, arXiv preprint arXiv:2308.07288 (2023).

\bibitem{bhatt-scholze-witt}
Bhargav Bhatt and Peter Scholze, \emph{Projectivity of the {W}itt vector affine
  {G}rassmannian}, Invent. Math. \textbf{209} (2017), no.~2, 329--423.
  \MR{3674218}

\bibitem{ekedahl-minimal}
Torsten Ekedahl, \emph{On minimal models in integral homotopy theory}, vol.~4,
  2002, The Roos Festschrift volume, 1, pp.~191--218. \MR{1918189}

\bibitem{horel}
Geoffroy Horel, \emph{Binomial rings and homotopy theory}, J. Reine Angew.
  Math. \textbf{813} (2024), 283--305. \MR{4780987}

\bibitem{ksz}
Dmitry Kubrak, Georgii Shuklin, and Alexander Zakharov, \emph{Derived binomial
  rings {I}: integral {B}etti cohomology of log schemes}, arXiv preprint
  arXiv:2308.01110 (2023).

\bibitem{kriz}
Igor K\v{r}\'{\i}\v{z}, \emph{{$p$}-adic homotopy theory}, Topology Appl.
  \textbf{52} (1993), no.~3, 279--308. \MR{1243609}

\bibitem{lehner}
Georg Lehner, \emph{Algebraic {$K$}-theory of coherent spaces}, arXiv preprint
  arXiv:2507.00221 (2025).

\bibitem{htt}
Jacob Lurie, \emph{Higher topos theory}, Annals of Mathematics Studies, vol.
  170, Princeton University Press, Princeton, NJ, 2009. \MR{2522659}

\bibitem{mandell-cochains}
Michael Mandell, \emph{Cochains and homotopy type}, Publ. Math. Inst. Hautes
  \'{E}tudes Sci. (2006), no.~103, 213--246. \MR{2233853}

\bibitem{quillen_homotopical}
Daniel~G. Quillen, \emph{Homotopical algebra}, Lecture Notes in Mathematics,
  vol. No. 43, Springer-Verlag, Berlin-New York, 1967. \MR{223432}

\bibitem{raksit}
Arpon Raksit, \emph{Hochschild homology and the derived de {R}ham complex
  revisited}, arXiv preprint arXiv:2007.02576 (2020).

\bibitem{stringall}
R.~W. Stringall, \emph{The categories of {$p$}-rings are equivalent}, Proc.
  Amer. Math. Soc. \textbf{29} (1971), 229--235. \MR{276153}

\bibitem{sullivan-infinitesimal}
Dennis Sullivan, \emph{Infinitesimal computations in topology}, Inst. Hautes
  \'{E}tudes Sci. Publ. Math. (1977), no.~47, 269--331 (1978). \MR{646078}

\bibitem{cesnavicius-scholze}
Kęstutis \v{C}esnavi\v cius and Peter Scholze, \emph{Purity for flat
  cohomology}, Ann. of Math. (2) \textbf{199} (2024), no.~1, 51--180.
  \MR{4681144}

\end{thebibliography}

\medskip
\noindent
\textsc{Department of Mathematics, Northwestern University}\\
{\ttfamily antieau@northwestern.edu}

\end{document}